\newtheorem{thm}{Theorem}
\newtheorem{lem}[thm]{Lemma}
\newtheorem{clm}[thm]{Claim}
\newtheorem{prp}[thm]{Proposition}
\newtheorem{cor}[thm]{Corollary}
\newtheorem{fct}[thm]{Fact}
\def\f{{\phi}}
\def\k{{\kappa}}
\def\s{{\sigma}}
\def\oj{{\overline{j}}}
\def\cF{{\cal F}}
\def\cG{{\cal G}}
\def\cH{{\cal H}}
\def\cK{{\cal K}}
\def\sg12{{\s_{1,2}}}
\def\sig21{{\s_{2,1}}}
\def\2N{{2^{[n]}}}
\def\mt{{\emptyset}}
\def\n1cr1{{\binom{n-1}{r-1}}}
\def\Ncr{{\binom{[n]}{r}}}
\def\phinv{{\phi^{-1}}}
\def\pr{{\prime}}
\def\rar{{\rightarrow}}
\def\sse{{\subseteq}}
\def\Sij{{\s_{i,j}}}
\def\Sji{{\s_{j,i}}}
\def\Spij{{\s^\pr_{i,j}}}
\author{
	Glenn Hurlbert\thanks{
		Department of Mathematics and Applied Mathematics,
		Virginia Commonwealth University
	}
	\thanks{
		\texttt{ghurlbert@vcu.edu}.
		Research partially supported by Simons Foundation Grant \#246436.
	}\\
Vikram Kamat\thanks{
	Department of Mathematics and Computer Science,
	University of Richmond
}
\thanks{
	\texttt{vkamat@richmond.edu}.
}\\
}
\title{New injective proofs of the Erd\H{o}s--Ko--Rado and Hilton--Milner theorems}
\begin{document}
	\maketitle
	
	\begin{abstract}
		A set system $\cF$ is \textit{intersecting} if for any $F, F'\in \cF$, $F\cap F'\neq \emptyset$. 
		A fundamental theorem of Erd\H{o}s, Ko and Rado states that if $\cF$ is an intersecting family of $r$-subsets of $[n]=\{1,\ldots,n\}$, and $n\geq 2r$, then $|\cF|\leq \n1cr1$. 
		Furthermore, when $n>2r$, equality holds if and only if $\cF$ is the family of all $r$-subsets of $[n]$ containing a fixed element. This was proved as part of a stronger result by Hilton and Milner. 
		In this note, we provide new injective proofs of the Erd\H{o}s--Ko--Rado and the Hilton--Milner theorems.
	\end{abstract}

	\section{The Erd\H{o}s--Ko--Rado theorem}
	
	For $0\leq j\leq n$, let $[j,n]=\{j,\ldots,n\}$. 
	In particular, set $[n]=[1,n]$. 
	Similarly, define $(j,n)=\{j+1,\ldots,n-1\}$. 
	For a set $X$ and $1\leq r\leq |X|$, denote $2^X=\{A:A\sse X\}$ and $\binom{X}{r}=\{A\in 2^X:|A|=r\}$. 
	A family $\cF\sse \Ncr$ is called $r$-{\it uniform}, with  $\cF_x=\{F\in\cF: x\in F\}$ called its {\it star centered at} $x$. 
	A {\it full star} is $\Ncr_x$ for some $x$; it is easy to see that $|\Ncr_x|=\n1cr1$. 
	We say that $\cF$ is {\it intersecting} if $A\cap B\neq\mt$ for every $A,B\in\cF$.
	\\
	\\
	One of the central results in extremal set theory, the Erd\H{o}s--Ko--Rado theorem finds a tight upper bound on the size of uniform intersecting set systems. As part of a stronger result that characterized the size and structure of the ``second best'' intersecting set systems, Hilton and Milner \cite{HilMil} proved that the extremal structures are essentially (up to isomorphism) unique.
	\\
	\begin{thm}
		\cite{EKR,HilMil}
		\label{EKR}
		If $1\le r\le n/2$ and $\cF\sse\Ncr$ is intersecting, then $|\cF|\le\n1cr1$.
		If $r<n/2$ then equality holds if and only if $\cF=\Ncr_x$ for some $x\in [n]$.
	\end{thm}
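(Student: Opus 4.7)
The plan is to construct an explicit injection $\phi\colon\cF\to\Ncr_1$. On sets already containing $1$ I set $\phi(F)=F$, so attention focuses on $F\in\cF$ with $1\notin F$; for such an $F$ the $r$ candidate images are the \emph{flips} $F_j:=(F\setminus\{j\})\cup\{1\}$ for $j\in F$, and I want to choose $j=j(F)$ canonically so that $\phi(F):=F_{j(F)}\in\Ncr_1\setminus\cF$. Injectivity of $\phi$ will then give $|\cF|\le|\Ncr_1|=\n1cr1$.

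First I verify that a legal flip exists. Suppose, to the contrary, that $F\in\cF$ with $1\notin F$ admits no legal flip, i.e., $F_j\in\cF$ for every $j\in F$. Any $H\in\cF$ with $1\notin H$ must then meet every $F_j$, forcing $|H\cap F|\ge 2$. Using the hypothesis $n\ge 2r$, so that $[n]\setminus(F\cup\{1\})$ contains at least $r$ elements, I expect to produce a set in $\cF$ violating this condition and reach a contradiction.

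Injectivity is the main obstacle. A collision $\phi(F)=\phi(F')$ for distinct $F,F'\in\cF$ avoiding $1$ forces $F\triangle F'=\{j(F),j(F')\}$, so that $F$ and $F'$ share $r-1$ elements and differ only at the flipped positions. Naive canonical rules such as ``smallest valid $j$'' or ``largest valid $j$'' admit explicit collisions already at $n=6$, $r=3$ (for instance on $\cF=\{\{2,5,6\},\{4,5,6\},\{1,2,5\},\{1,2,6\},\{1,4,5\},\{1,4,6\}\}$, where both $\{2,5,6\}$ and $\{4,5,6\}$ are forced to flip to $\{1,5,6\}$), so I plan to use a more refined rule: either an iterated \slide that traces an augmenting path from $F$ to a genuinely new element of $\Ncr_1\setminus\cF$, or a reformulation as a bipartite matching problem with parts $\cF\setminus\Ncr_1$ and $\Ncr_1\setminus\cF$ and single-flip edges, in which Hall's condition is verified from the intersecting hypothesis.

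For the Hilton--Milner uniqueness half, assume $r<n/2$ and $|\cF|=\n1cr1$, so that $\phi$ is a bijection onto $\Ncr_1$. Every $G\in\Ncr_1\setminus\cF$ then has a unique preimage $F\in\cF$ with $1\notin F$ and $G=F_{j(F)}$. I will argue that this rigidity, combined with the strict inequality $r<n/2$, forces $\cF$ to coincide with $\Ncr_x$ for some $x\in[n]$: any mismatch would, via the canonical flip, either collide with an existing preimage or leave some element of $\Ncr_1$ unmatched, contradicting bijectivity. The strictness $n>2r$ is precisely what is needed to rule out the exotic non-star extremal families that appear at the boundary $n=2r$.
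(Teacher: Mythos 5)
Your approach is genuinely different from the paper's, and unfortunately it has a fatal gap at its core. You try to build the injection directly on the original intersecting family using a single ``flip'': send $F$ with $1\notin F$ to $(F\setminus\{j\})\cup\{1\}$ for a well-chosen $j\in F$. You correctly notice that naive choices of $j$ collide, and you fall back on either an augmenting-path (``iterated slide'') argument or a Hall's-theorem argument in the bipartite graph with parts $\cF\setminus\Ncr_1$ and $\Ncr_1\setminus\cF$ and single-flip edges. But this bipartite graph simply does not satisfy Hall's condition for general intersecting families, so \emph{no} single-flip rule and no matching-theoretic fix can work. Your own example exhibits this: for $n=6$, $r=3$, $\cF=\{\{2,5,6\},\{4,5,6\},\{1,2,5\},\{1,2,6\},\{1,4,5\},\{1,4,6\}\}$, the two sets $\{2,5,6\}$ and $\{4,5,6\}$ each have $\{1,5,6\}$ as their \emph{only} single-flip image lying in $\Ncr_1\setminus\cF$ (the other flips land in $\cF$), so $N(\{\{2,5,6\},\{4,5,6\}\})=\{\{1,5,6\}\}$, and Hall's condition fails with $|N(S)|=1<2=|S|$. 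Since Hall's condition is equivalent to the existence of a saturating matching, the augmenting-path plan cannot rescue it either; you are attributing the collision to a poor canonical rule, but the obstruction is structural. (There is also a small slip in the ``legal flip exists'' step: with $1\notin F$ and $n\ge 2r$ one only gets $|[n]\setminus(F\cup\{1\})|\ge r-1$, not $\ge r$, though that would be repairable.)

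The paper avoids all of this by doing something you did not consider: it first \emph{shifts} $\cF$ to a shifted intersecting family $\cF'$ (this preserves size and intersection), and then defines the injection on $\cF'$ not by a single flip but by a symmetric difference with an interval, $\phi(F)=F\triangle[2\k_F]$, where $\k_F$ is the largest $k$ with $|F\cap[2k]|=k$ (which exists by Frankl's structural result for shifted intersecting families). This operation typically changes many elements of $F$ at once, and it is the combination of shiftedness plus this interval-complementation that makes injectivity provable; the single-flip bipartite graph on an unshifted family just does not have enough room. If you want to salvage your plan, the missing ingredient is the shifting reduction: once $\cF$ is shifted, your small example disappears (shifted families concentrate on small indices, and the troublesome family above is not shifted), and a more powerful canonical operation than a single flip becomes available. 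For the uniqueness half, your sketch is too thin to assess on its own, but it inherits the same problem since it relies on $\phi$ being a bijection.
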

	\ \\
	A cornerstone of extremal combinatorics, the theorem has inspired a multitude of research avenues and applications (see \cite{DezFra,FranklShift, FraTok, GodMea,HurKamChor}). 
	The original proof by Erd\H{o}s, Ko and Rado made use of the now-central \textit{shifting} technique in conjunction with an induction argument. 
	Daykin \cite{Daykin} later discovered that the theorem is implied by the Kruskal-Katona theorem \cite{KK1, KK2}, while Katona \cite{Katona} gave possibly the simplest proof using the notion of \textit{cyclic permutations}. Most recently, Frankl and F\"uredi \cite{FranklFuredi} provided another new short proof of the theorem using a non-trivial result of Katona \cite{KatShadow} on shadows of intersecting families.
	\\
	\\
	The new proof we provide is closest in spirit to the original proof, but avoids induction and counting, and is as short as any.
	It relies on the shifting operation and some of its structural properties to construct an injective function that maps any intersecting family to a subfamily of $\Ncr_1$.
	While the shifting operation is injective, it is not explicitly so; that is, the shift operation on a set depends on the entire family.
	However, our new injection for shifted families is explicit. 
	By direct comparison, while the approach of \cite{FranklFuredi} uses an explicit complementation followed by a shadow bound, our approach uses shifting followed by an explicit complementation. Finally, as mentioned earlier, our technique also helps recover a new short proof of the Hilton--Milner theorem (Theorem \ref{HM}), which we describe in the final section.  We also note here that Borg \cite{BorgTwo} used an injective argument to prove an analog of the Erd\H{o}s--Ko--Rado theorem for integer partitions.

	\section{Shifting}
	
	We begin by reviewing the definition of the renowned shifting operation and state some of its important properties. 
	For set $A\sse [n]$ and $x\in [n]$, let $A+x=A\cup\{x\}$, $A-x=A\setminus\{x\}$.
	\\
	\\
	Define the $(i,j)$-{\it shift} $\Sij:\2N\rar\2N$ as follows: for $A\in \2N$, let $\Sij(A)=A-i+j$ if $i\in A$ and $j\not\in A$, and $\Sij(A)=A$ otherwise.
	Extend this definition to $\Sij:2^\2N\rar 2^\2N$ as follows: for $\cF \sse \2N$, let $\Sij(\cF)=\{\Spij(A): A\in\cF\}$, where $\Spij(A)=\Sij(A)$ if $\Sij(A)\not\in\cF$, and $\Spij(A)=A$ otherwise. 
	The following facts are well known and easy to verify.
	\\
	\begin{fct}
		\label{sizes}
		For all $A\sse [n]$ and all $\cF\sse\2N$ we have
		\begin{enumerate}
			\item
			$|\Sij(A)|=|A|$,
			\item
			$|\Sij(\cF)|=|\cF|$, and
			\item
			If $\cF$ is intersecting then so is $\Sij(\cF)$.
		\end{enumerate}
	\end{fct}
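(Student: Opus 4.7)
The plan is to dispose of (1)--(3) in sequence. Item (1) is immediate from the definition: $\Sij(A)\in\{A,\,A-i+j\}$, and both options have cardinality $|A|$. For item (2), since $\Sij(\cF)=\{\Spij(A):A\in\cF\}$, it suffices to prove that $\Spij:\cF\rar\Sij(\cF)$ is injective. Suppose $A,B\in\cF$ satisfy $\Spij(A)=\Spij(B)$. If $\Spij$ fixes both, then $A=B$; if $\Spij$ moves both, then $A-i+j=B-i+j$, which forces $A=B$ upon inverting the shift. The remaining case is that $\Spij$ moves exactly one, say $A$, giving $B=\Spij(B)=\Spij(A)=A-i+j$; but $A$ being moved requires $A-i+j\notin\cF$, contradicting $B\in\cF$.

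For item (3), let $A,B\in\cF$ with $A\cap B\ne\mt$. The key observation is that the only element $\Sij$ can ever remove is $i$, so every element of $A\cap B$ other than $i$ automatically survives in both $\Spij(A)$ and $\Spij(B)$. Hence we may assume $A\cap B=\{i\}$. If $j\in A$ (whence $j\notin B$), the shift-precondition fails for $A$, making $\Spij(A)=A$, which contains both $i$ and $j$; then $\Spij(A)\cap\Spij(B)$ contains $i$ if $\Spij(B)=B$, and $j$ if $\Spij(B)=B-i+j$. The case $j\in B$ is symmetric. Now assume $j\notin A\cup B$, so the precondition holds for both. If both are moved, each image contains $j$; if neither is moved, the images are $A$ and $B$ themselves, still meeting at $i$. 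The delicate subcase is that exactly one is moved, say $A$: then $A-i+j\notin\cF$ while $B-i+j\in\cF$, so the intersecting property applied to $A$ and $B-i+j$ produces some $k\in A\cap(B-i+j)$; but $k\ne j$ (since $j\notin A$) and $k\ne i$ (since $i\notin B-i+j$), so $k\in A\cap B\setminus\{i\}$, contradicting $A\cap B=\{i\}$.

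The main obstacle is this last mixed subcase in item (3): it is the only place where one cannot simply trace an element of $A\cap B$ through the shifts, and instead must use the full strength of the definition of $\Spij$ (that $A$ moved supplies $A-i+j\notin\cF$, while $B$ fixed with the precondition holding supplies $B-i+j\in\cF$) to invoke the intersecting hypothesis on a shifted companion and then rule out both $i$ and $j$ as the common element.
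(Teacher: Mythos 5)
Your proof is correct. Since the paper states Fact~\ref{sizes} as ``well known and easy to verify'' and offers no proof, there is nothing to compare against; your argument is the standard careful one. Part (1) is immediate, part (2) correctly identifies the only nontrivial injectivity case (exactly one of $A,B$ is moved) and resolves it via the definition of $\Spij$, and part (3) correctly reduces to $A\cap B=\{i\}$, handles the easy cases, and disposes of the delicate mixed subcase by applying the intersecting hypothesis to $A$ and $B-i+j\in\cF$ and ruling out both $i$ and $j$ as the common element. One minor point worth making explicit: the whole statement is trivial when $i=j$ (since $\Sij$ is then the identity), so one may assume $i\neq j$, which you implicitly use when deducing $j\notin B$ from $j\in A$ and $A\cap B=\{i\}$.
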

	\ \\
	We say that a family $\cF\sse \Ncr$ is \textit{shifted} if for any $1\leq j<i\leq n$, $\Sij(\cF)=\cF$. 
	Frankl \cite{FranklShift} proved the following useful proposition about shifted families.
	\\
	\begin{prp}\label{FranklProp}
		Let $\mathcal{F}\sse \binom{[n]}{r}$ be shifted and intersecting. 
		Then for every $F\in \cF$, there exists a $k=k(F)$ such that $|F\cap [2k+1]|\geq k+1$.
	\end{prp}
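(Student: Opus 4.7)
The plan is to argue by contradiction: suppose $F=\{a_1<a_2<\cdots<a_r\}\in\cF$ satisfies $|F\cap[2k+1]|\le k$ for every $k\ge 0$. Taking $k=i-1$ gives $|F\cap[2i-1]|\le i-1$, so fewer than $i$ of the elements of $F$ lie in $[2i-1]$; since $a_1<\cdots<a_i$ are the $i$ smallest, this forces $a_i\ge 2i$ for each $1\le i\le r$. (If $n<2r$ then the conclusion of the proposition is automatic upon taking $k=r-1$, so I may assume $n\ge 2r$ throughout.)

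The key ingredient is the following ``downward monotonicity'' of shifted families: if $A=\{a_1<\cdots<a_r\}\in\cF$ and $B=\{b_1<\cdots<b_r\}$ satisfies $b_i\le a_i$ for all $i$, then $B\in\cF$. I would prove this by induction on $\sum_i(a_i-b_i)$. For the inductive step, choose the smallest index $j$ with $b_j<a_j$; then $b_i=a_i$ for $i<j$, and if $b_j\in A$ then $b_j=a_\ell=b_\ell$ for some $\ell<j$, contradicting strict monotonicity of the $b_i$'s. Hence $a_j\in A$, $b_j\notin A$ and $b_j<a_j$, so shiftedness delivers $A-a_j+b_j\in\cF$ and reduces the discrepancy sum by $a_j-b_j>0$, closing the induction.

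To finish, I apply the monotonicity lemma to $F$ twice: once with $B=\{1,3,5,\ldots,2r-1\}$ (admissible because $2i-1<2i\le a_i$) and once with $B'=\{2,4,6,\ldots,2r\}$ (admissible because $2i\le a_i$, which is where the assumption $n\ge 2r$ is used). Both sets lie in $\cF$, but $B\cap B'=\mt$, contradicting the hypothesis that $\cF$ is intersecting.

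The only real obstacle is the monotonicity lemma, where one must verify that the specific shift $\s_{a_j,b_j}$ is actually applicable at each inductive step; this is precisely what the ``$b_j\notin A$'' argument is doing. Once that is in hand, the translation of the hypothesis into the elementwise bound $a_i\ge 2i$, together with the explicit disjoint pair $\{1,3,\ldots,2r-1\}$ and $\{2,4,\ldots,2r\}$ of $r$-subsets of $[2r]$, serves as the certificate of non-intersection and completes the proof.
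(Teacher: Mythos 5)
The paper does not prove Proposition~\ref{FranklProp} itself but cites Frankl \cite{FranklShift}; your argument is correct and is precisely the standard proof of this fact. The reduction of the negation to the elementwise bound $a_i\ge 2i$, the downward-compression (domination) lemma for shifted families, and the disjoint pair $\{1,3,\ldots,2r-1\}$, $\{2,4,\ldots,2r\}$ are exactly the ingredients of Frankl's proof, and the edge case $n<2r$ is handled correctly by taking $k=r-1$.
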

	\ \\
	The following corollary of Proposition \ref{FranklProp} is immediate, and will be used in the proof of Claim \ref{inj}.
	\\
	\begin{cor}\label{NewCor}
		Let $\cF\sse \binom{[n]}{r}$ be shifted and intersecting, and let $r\leq n/2$. 
		Then for every $F\in \cF$, there exists a $k=k(F)$ such that $|F\cap [2k]|=k$.
	\end{cor}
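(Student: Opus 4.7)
\medskip
\noindent\textbf{Proof plan for Corollary \ref{NewCor}.}
The plan is to take the $k$ furnished by Proposition \ref{FranklProp}, but to insist on choosing it \emph{minimally}, and then to read off the exact value $|F\cap[2k]|=k$ from the one-step-at-a-time behaviour of partial intersection counts.

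Concretely, fix $F\in\cF$ and set $a_m=|F\cap[m]|$ for $0\le m\le n$, so that $a_0=0$, $a_n=r$, and $a_m-a_{m-1}\in\{0,1\}$. By Proposition \ref{FranklProp} the set $\{k\ge 0:a_{2k+1}\ge k+1\}$ is non-empty; let $k^\ast=k(F)$ be its minimum. The plan is to show that this $k^\ast$ already satisfies $a_{2k^\ast}=k^\ast$.

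The easy case is $k^\ast=0$, where $|F\cap[0]|=0=k^\ast$ holds trivially. In the main case $k^\ast\ge 1$, minimality gives $a_{2k^\ast-1}\le k^\ast-1$, while the defining property gives $a_{2k^\ast+1}\ge k^\ast+1$. Since $a$ increases by at most $1$ per step, the inequality $a_{2k^\ast+1}-a_{2k^\ast-1}\ge 2$ must actually be an equality, which forces $a_{2k^\ast-1}=k^\ast-1$ and, in turn, $a_{2k^\ast}=a_{2k^\ast-1}+1=k^\ast$, as required.

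There is no real obstacle here — the only step worth flagging is the deduction in the second case, which is purely the observation that a non-decreasing integer sequence with unit step increments cannot rise from $\le k^\ast-1$ to $\ge k^\ast+1$ in two steps without passing through the exact value $k^\ast$. The hypothesis $r\le n/2$ does not enter the inequality manipulation itself; its role is ambient, ensuring that $k^\ast\le r-1\le n/2-1$, so that the index $2k^\ast$ is a bona fide element of $\{0,1,\dots,n\}$ and the quantity $|F\cap[2k^\ast]|$ has its intended meaning.
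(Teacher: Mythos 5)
Your proof is correct, but it takes a genuinely different route from the paper's. The paper defines $k$ to be the \emph{maximum} index with $|F\cap[2k]|\ge k$ and derives a contradiction from $|F\cap[2k]|>k$: it handles the boundary case $2k=n$ explicitly using $r\le n/2$, and otherwise invokes maximality at $2k+2$ to contradict monotonicity of the intersection counts. You instead take the \emph{minimum} $k$ satisfying Frankl's condition $|F\cap[2k+1]|\ge k+1$ and run a squeeze: minimality pins $a_{2k^\ast-1}\le k^\ast-1$ from below, the defining inequality pins $a_{2k^\ast+1}\ge k^\ast+1$ from above, and the unit-step property of the partial counts forces both to be equalities and $a_{2k^\ast}=k^\ast$ exactly. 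Both are short; yours avoids the paper's separate $2k=n$ case and the small gloss where the paper reads $|F\cap[2k]|\ge k$ directly off Proposition~\ref{FranklProp} (which technically asserts $|F\cap[2k+1]|\ge k+1$), while the paper's choice of the \emph{maximum} $k$ is the one that reappears as $\k_F$ in the proof of Claim~\ref{inj}, so your $k^\ast$ should not be conflated with that later parameter.

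Two minor points worth tightening: (i) the deduction ``which forces $a_{2k^\ast-1}=k^\ast-1$'' is really: if $a_{2k^\ast-1}<k^\ast-1$ then $a_{2k^\ast+1}\le a_{2k^\ast-1}+2<k^\ast+1$, contradiction — state it that way rather than appealing to the equality of a difference alone; (ii) the claim $k^\ast\le r-1$ follows from $a_{2k^\ast+1}\ge k^\ast+1$ together with $a_{2k^\ast+1}\le r$, and then $2k^\ast+1\le 2r-1\le n-1$ uses $r\le n/2$, so the hypothesis is doing a bit more than ``ambient'' work — it is exactly what keeps the indices in range, which is worth saying explicitly since Proposition~\ref{FranklProp} itself does not assume $r\le n/2$.
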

	
	\begin{proof}
		Let $F\in \cF$ and let $k=k(F)$ be maximum such that $|F\cap [2k]|\geq k$. 
		From Proposition \ref{FranklProp}, we know that such a $k$ exists. 
		We claim that $|F\cap [2k]|= k$. 
		If $2k=n$, then we have $|F\cap [n]|=r\leq \frac{1}{2}(2k)=k$, which implies the result, so we assume that $2k<n$. 
		Suppose that $|F\cap [2k]|\geq k+1$. 
		First, this implies that $n\geq 2k+2$. 
		Next, the maximality of $k$ implies that $|F\cap [2k+2]|\leq k$, a contradiction. 
		Thus $|F\cap [2k]|=k$.
	\end{proof}

	\section{Proof of Theorem \ref{EKR}}
	
 For intersecting $\cF\sse\binom{[n]}{r}$ with $1\le r\le n/2$,
	we shift $\cF$ until it becomes the shifted, intersecting family $\cF'$. Now define the function $\f:\cF'\to \Ncr_1$ as follows.
	For a set $F\in \cF'$, let $\k=\k_F$ be maximum such that $|F\cap [2\k]|=\k$. 
	We know that $\k$ exists, from Corollary \ref{NewCor}. 
	Now, if $1\in F$, let $\f(F)=F$; otherwise, let $\f(F)=F\triangle [2\k]$.
	We also denote $\phi(\cF)=\{\phi(A): A\in\cF\}$, as well as write $\phinv(B)=A$ whenever $\phi(A)=B$, with $\phinv(\cH)=\{\phinv(B): B\in\cH\}$.
	\\
    \\
    \noindent
	Fact \ref{sizes} gives $|\cF|=|\cF'|$, and Claim \ref{inj} below gives $|\cF'|\le\n1cr1$.
	When $r<n/2$, Lemma \ref{invphi} shows that $\cF'$ is a full star, and Lemma \ref{preshift} below shows that $\cF$ is a full star.\hfill $\Box$
    \\
    \\
    \noindent
    We now prove Claim \ref{inj} and Lemmas \ref{preshift} and \ref{invphi} in the subsections below.

	\subsection{Injection}

	\begin{clm}\label{inj}
		For $r\leq n/2$, if $\mathcal{F}\sse\Ncr$ is shifted and intersecting then the function $\f$ defined above is injective.
	\end{clm}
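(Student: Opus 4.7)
I would first verify that $\f$ actually maps $\cF$ into $\Ncr_1$: each $\f(F)$ has size $r$ because $F\triangle[2\k]$ exchanges $\k$ elements inside $[2\k]$ with $\k$ outside, and contains $1$ either directly when $1\in F$ or, when $1\notin F$, because $1\in[2\k]\setminus F$ (with $\k\ge 1$ guaranteed by Corollary \ref{NewCor}). Then to prove injectivity I would suppose $\f(F_1)=\f(F_2)$ and split into three cases based on which of $F_1,F_2$ contain $1$.

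When both contain $1$ the equation is immediate. When neither contains $1$, set $\k_i=\k_{F_i}$ and assume $\k_1\le\k_2$; restricting $F_1\triangle[2\k_1]=F_2\triangle[2\k_2]$ to the three intervals $[2\k_1]$, $(2\k_1,2\k_2]$, and $(2\k_2,n]$ and invoking $|F_2\cap[2\k_2]|=\k_2$ yields $|F_1\cap[2\k_2]|=\k_2$. If $\k_1<\k_2$ this contradicts the maximality of $\k_1=\k_{F_1}$, so $\k_1=\k_2$, whence $F_1=F_2$.

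The main obstacle is the mixed case: say $1\in F_1$ and $1\notin F_2$, so the equation becomes $F_1=F_2\triangle[2\k_2]$. I decompose $F_2=S\cup T$ with $S=F_2\cap[2\k_2]$ (of size $\k_2$, missing $1$) and $T=F_2\setminus[2\k_2]=\{t_1<\cdots<t_m\}$ of size $m:=r-\k_2$, so that $F_1=([2\k_2]\setminus S)\cup T$. Let $u_1<u_2<\cdots$ enumerate $(2\k_2,n]\setminus T$ (which has size at least $m$ because $n\ge 2r$) and take $F^*=S\cup\{u_1,\ldots,u_m\}$. My plan is to prove $u_k<t_k$ for every $k\le m$, so that performing the shifts $t_k\mapsto u_k$ sequentially on $F_2$---each valid because $u_k$ lies outside $S$, outside $T$, and outside the earlier $u_1,\ldots,u_{k-1}$---transforms $F_2$ into $F^*$ while staying inside $\cF$ by shiftedness. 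Then $F^*\cap F_1=\emptyset$ (the $S$-part avoids $[2\k_2]\setminus S$ and $T$, and the $u$-part avoids $[2\k_2]$ and $T$), contradicting that $\cF$ is intersecting.

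The inequality $u_k<t_k$ is where the maximality of $\k_2$ is crucial. It forces $a_k:=|T\cap(2\k_2,2\k_2+2k]|\ne k$ for every $k\ge 1$; and since $a_k-k$ starts at $0$, changes by $-1$, $0$, or $1$ per step, and eventually becomes negative (as $a_k\le m$), a short walk argument forces $a_k\le k-1$ for every $k\ge 1$. This bound yields $t_k\ge 2\k_2+2k+1$ (at most $k-1$ members of $T$ lie in $[2\k_2+1,2\k_2+2k]$) and $u_k\le 2\k_2+2k$ (at least $k+1$ nonmembers of $T$ do), giving $u_k<t_k$ as required.
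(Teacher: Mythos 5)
Your proof is correct and follows essentially the same strategy as the paper's: handle the two same-parity cases via the maximality of $\k$, and in the mixed case shift $F_2 \setminus [2\k_{F_2}]$ down into its complement within $(2\k_{F_2},n]$ to produce a member of $\cF$ disjoint from $F_1$. Your walk argument giving $a_k\le k-1$ (hence $u_k<t_k$) is a clean reformulation of the paper's ``Property~$\star$'' and its discrete-mean-value step.
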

	
	\begin{proof}
		Let $F_1,F_2\in \cF$, $F_1\neq F_2$. 
		If $1\in F_1$ and $1\in F_2$ then it is obvious that $\f(F_1)\neq \f(F_2)$. 
		\\
		\\
		Suppose that $1\notin F_1$ and $1\notin F_2$. 
		If $\k=\k_{F_1}=\k_{F_2}$ then $F_1\cap [2\k]\neq F_2\cap [2\k]$ or $F_1\setminus [2\k]\neq F_2\setminus [2\k]$.
		Then the definition of $\f$ implies that $\f(F_1)\neq \f(F_2)$, as required. 
		So, without loss of generality, we may assume that $\k_{F_1}<\k_{F_2}$. 
		Using maximality of $\k_{F_1}$, we have that $F_1\setminus [2\k_{F_2}]\neq F_2\setminus [2\k_{F_2}]$. 
		As $F_1\setminus [2\k_{F_2}]\sse \f(F_1)$ and $F_2\setminus [2\k_{F_2}]\sse \f(F_2)$, this implies that $\f(F_1)\neq \f(F_2)$.
		\\
		\\
		Finally, suppose $1\in F_1$ and $1\notin F_2$. 
		We need to show that $\f(F_2)\neq F_1$. 
		Suppose instead that $\f(F_2)=F_1$. 
		Let $G_2=F_2\setminus [2\k_{F_2}]$ and break $G_2$ into its maximum intervals. 
		That is, we write $G_2=\cup_{i=0}^{p-1}[t_i,s_{i+1}]$, where 
        $2\k_{F_2}=s_0<t_0$, 
        $t_i\le s_{i+1}$ for each $0\le i<p$,
        $s_i+1<t_i$ for each $0<i<p$, and
        $t_p=n$. 
		For every $h\in [0,p-1]$, we can see that $|\cup_{i=0}^h (s_{i},t_{i})|>|\cup_{i=0}^h [t_{i},s_{i+1}]|$, which we refer to as {\it Property $\star$}.
        Indeed, for each $1\le j\le n$ define $X_j=|F_2\setminus [j]|-|F_2\cap [j]|$.
        Then $X_{2\k_{F_2}}=0$, $X_{n}>0$, and $|X_j-X_{j+1}|=1$ for all $j$.
        If ever we have $|\cup_{i=0}^h (s_{i},t_{i})|\leq |\cup_{i=0}^h [t_{i},s_{i+1}]|$ for some $h\in [0,p-1]$ then $X_{s_{h+1}}\le 0$.
        Thus there is some $l\in [s_{h+1},n]$ such that $X_l=0$ (the discrete mean value theorem); i.e. $|F_2\cap [l]|=l/2$.
        This contradicts the maximality of $\k_{F_2}$. 
                
		Let $P\sse \cup_{i=0}^{p-1} (s_{i},t_{i})$ be the set of the smallest $|G_2|$ elements in $\cup_{i=0}^{p-1} (s_{i},t_{i})$. 
		It is easy to see that $P\cap G_2=\emptyset$. 
		Also, because of Property $\star$, $P$ can be obtained from $G_2$ by a sequence of $(i,j)$-shifts $\Sij$. 
		Consequently, as $\cF$ is shifted, $F_2^\pr=F_2\setminus G_2\cup P\in \cF$. 
		However, from the definition of $\f$, and under the assumption that $\f(F_2)=F_1$, we have $G_2\sse F_1$. 
		This implies that $F_1\cap F_2^\pr=\emptyset$, a contradiction, as $\mathcal{F}$ is intersecting. 
	\end{proof}
    
    \noindent
	We make note of the following interesting property of the parameter $\k$.
	If $\cF\sse\Ncr$ is shifted and intersecting, with $A\in\cF$ and $B=\phi(A)$, then $|B\cap [2k]|=|A\cap [2k]|$ for all $k\ge \k_A$.
	This makes it possible to define $\k_B$ similarly, from which we see that $\k_B=\k_A$.
	Consequently, if we know that $B\in\phi(\cF)$ then it must be that $\phinv(B)\in\{B,B\triangle [2\k_B]\}$.
	We phrase this as follows.
	
	\begin{prp}\label{phiprop}
		For $r\leq n/2$, if $\cF\sse\Ncr$ is shifted and intersecting, with $B\in\phi(\cF)\setminus\cF$, then $\phinv(B)=B\triangle [2\k_B]$.\hfill $\Box$
	\end{prp}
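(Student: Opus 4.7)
The plan is to unpack what ``$B\in\phi(\cF)\setminus\cF$'' forces and then verify, using the identity sketched in the paragraph preceding the proposition, that the parameter $\k$ is preserved by $\phi$ in a strong enough sense to read off $\phinv(B)$.

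First I would pick any $A\in\cF$ with $\phi(A)=B$; such $A$ exists since $B\in\phi(\cF)$. Because $B\notin\cF$ while $A\in\cF$, we must have $A\ne B$, so the definition of $\phi$ forces $1\notin A$ and $B=A\triangle[2\k_A]$. The substantive content of the proposition is then the claim $\k_B=\k_A$, since once that equality is established we get $\phinv(B)=A=B\triangle[2\k_A]=B\triangle[2\k_B]$ (uniqueness of $\phinv(B)$ follows from Claim \ref{inj}).

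To prove $\k_B=\k_A$, I would verify the displayed identity $|B\cap[2k]|=|A\cap[2k]|$ for every $k\ge\k_A$. This is a short set-theoretic computation: for $k\ge\k_A$ the interval $[2\k_A]$ is contained in $[2k]$, so
\[
(A\triangle[2\k_A])\cap[2k]=(A\cap[2k])\triangle[2\k_A],
\]
and since $|A\cap[2\k_A]|=\k_A$ the two symmetric-difference terms of equal size cancel in the cardinality count, giving $|B\cap[2k]|=|A\cap[2k]|$. Taking $k=\k_A$ yields $|B\cap[2\k_A]|=\k_A$, so $\k_B\ge\k_A$; taking any $k>\k_A$ and using the maximality of $\k_A$ for $A$ shows $|B\cap[2k]|=|A\cap[2k]|\ne k$, so $\k_B\le\k_A$. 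Hence $\k_B=\k_A$.

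The only step that requires mild care is justifying that $\k_B$ is defined at all, i.e.\ that Corollary~\ref{NewCor} applies to $B$; this is not automatic from the hypotheses since $B$ need not lie in $\cF$. However, the computation above already produces a witness $k=\k_A$ with $|B\cap[2k]|=k$, so we may simply \emph{define} $\k_B$ as the maximum such $k$ without appealing to Corollary~\ref{NewCor}, and the argument goes through. Combining the pieces yields $\phinv(B)=B\triangle[2\k_B]$, as required.
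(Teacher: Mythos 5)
Your proposal is correct and follows essentially the same route as the paper: the paper leaves the proposition as an immediate consequence of the preceding paragraph (the identity $|B\cap[2k]|=|A\cap[2k]|$ for $k\ge\k_A$, hence $\k_B=\k_A$, hence $\phinv(B)\in\{B,B\triangle[2\k_B]\}$, and then $B\notin\cF$ rules out $\phinv(B)=B$), and you simply supply the short cardinality computation the paper omits and make the $\k_B=\k_A$ step explicit.
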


	\subsection{Star Preservation}
	Here we show that the pre-shift of any full star is a full star, and also that $\phinv(\binom{[n]}{r}_1)=\binom{[n]}{r}_1$.
	Let $G(M,s)$ be the graph on the vertex set $\binom{M}{s}$ having edge $AB$ (for any $A, B\in \binom{M}{s}$) whenever $|A\triangle B|=2$.
	
	\begin{fct}\label{conn}
		For $0\le s\le |M|$ we have that $G(M,s)$ is connected.
	\end{fct}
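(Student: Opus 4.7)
The plan is to prove connectivity by induction on the parameter $d = |A\triangle B|/2$ between any two vertices $A, B\in\binom{M}{s}$. First I would handle the degenerate cases $s=0$ and $s=|M|$, where $\binom{M}{s}$ is a single vertex and the statement is vacuous. For $0<s<|M|$, the identity $|A|=|B|=s$ forces $|A\triangle B|$ to be even, so the parameter $d$ is a non-negative integer, and it suffices to show that any two vertices at distance parameter $d$ are joined by a path of length $d$.

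For the induction, the base case $d=0$ gives $A=B$, which is trivially connected. For the inductive step, assuming $d\ge 1$ I would pick any $x\in A\setminus B$ and $y\in B\setminus A$ (both sets are nonempty since $d\ge 1$) and set $A' = A-x+y$. Then $A'\in\binom{M}{s}$, $|A\triangle A'|=2$ (so $AA'$ is an edge of $G(M,s)$), and $|A'\triangle B|=2(d-1)$. The inductive hypothesis supplies a path from $A'$ to $B$, which prepended by the edge $AA'$ gives a path from $A$ to $B$.

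There isn't really a hard step here: the one-element-swap construction does all the work, and no earlier result from the excerpt is invoked. One could alternatively appeal to the classical connectivity of the Johnson graph $J(|M|,s)$, of which $G(M,s)$ is the standard description, but the self-contained swap argument is just as short and keeps the note free of outside citations.
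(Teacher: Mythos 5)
Your proof is correct, and it takes a genuinely different route from the paper. The paper disposes of this fact in one line by invoking the revolving-door Gray code for $s$-subsets (Algorithm R in Knuth's \emph{Art of Computer Programming}, Vol.\ 4, Fasc.\ 3), which yields the stronger statement that $G(M,s)$ is Hamiltonian, and a fortiori connected. You instead give a self-contained induction on the swap distance $d=|A\triangle B|/2$: a single element swap $A\mapsto A-x+y$ with $x\in A\setminus B$, $y\in B\setminus A$ decreases $d$ by one while traversing an edge, so any two vertices are joined by a path of length $d$. Both are fine; the paper's citation is shorter but proves (and imports) more than is needed, while your argument is elementary, avoids the external reference, and in fact exhibits that the graph distance in $G(M,s)$ equals $\tfrac{1}{2}|A\triangle B|$, which is the standard proof that the Johnson graph is connected. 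One tiny wording nit: when $s\in\{0,|M|\}$ the one-vertex graph is \emph{trivially} connected, not vacuously so, but that does not affect the argument.
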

	
	\begin{proof}
		The standard revolving door algorithm (Gray code for uniform subsets; see Algorithm R in Section 7.2.1.3 of \cite{Knuth}) shows that $G(M,s)$ is hamiltonian.
	\end{proof}
	
	\begin{prp}\label{tradelemma}
		For $1\le r<n/2$ and intersecting $\cF\subset\binom{[n]}{r}$ with $|\cF|=\n1cr1$, let $\cH=\Sji(\cF)$ for some $i,j\in [n]$.
		Suppose that $\cH=\binom{[n]}{r}_i$ and define $M=[n]\setminus\{i,j\}$.
		\begin{enumerate}
			\item \label{disjointprop}
			If $A, C\in\binom{M}{r-1}$, $A\cap C=\emptyset$, and $A+j\in\cF$ then $C+j\in\cF$.
			\item \label{tradeprop}
			If $A, B\in\binom{M}{r-1}$, $AB\in E(G(M,r-1))$, and $A+j\in\cF$ then $B+j\in\cF$.
		\end{enumerate}
	\end{prp}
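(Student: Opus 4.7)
The plan is to first extract the structural constraint that $\Sji(\cF) = \binom{[n]}{r}_i$ places on $\cF$, then prove (\ref{disjointprop}) via a short disjointness contradiction, and finally derive (\ref{tradeprop}) from (\ref{disjointprop}) by routing through an auxiliary set that is disjoint from both $A$ and $B$.

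The pivotal observation is the following. For every $C \in \binom{M}{r-1}$, the set $C + i$ belongs to $\binom{[n]}{r}_i = \Sji(\cF)$, and hence has a preimage $X \in \cF$ under the partial shift $\s^\pr_{j,i}$. A case check — either $X$ is fixed by $\s^\pr_{j,i}$ (forcing $X = C + i$), or $X$ is moved with $\Sji(X) = C + i$ (forcing $X = C + j$) — shows that the only possible preimages are $C + i$ and $C + j$. In particular, if $C + j \notin \cF$, then $C + i \in \cF$.

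For (\ref{disjointprop}), assume $A + j \in \cF$ and $A \cap C = \mt$, and suppose for contradiction that $C + j \notin \cF$. The observation then gives $C + i \in \cF$. Since $A, C \sse M$ forces $i \notin A$ and $j \notin C$, we obtain $(A + j) \cap (C + i) = A \cap C = \mt$, contradicting that $\cF$ is intersecting. Hence $C + j \in \cF$.

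For (\ref{tradeprop}), given $A, B \in \binom{M}{r-1}$ with $|A \triangle B| = 2$, we have $|A \cap B| = r - 2$ and $|A \cup B| = r$, so $|M \setminus (A \cup B)| = n - 2 - r \ge r - 1$ (using $r < n/2$ with integrality to conclude $n \ge 2r + 1$). Any $C \in \binom{M \setminus (A \cup B)}{r-1}$ is therefore disjoint from both $A$ and $B$, and two applications of (\ref{disjointprop}) yield $A + j \in \cF \Rightarrow C + j \in \cF \Rightarrow B + j \in \cF$. The only nontrivial step is the preimage observation; after that, both parts collapse to one-line checks. I do not need Fact \ref{conn} here, but expect it to be combined with (\ref{tradeprop}) elsewhere to propagate the $+j$ conclusion across all of $\binom{M}{r-1}$.
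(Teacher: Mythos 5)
Your proof is correct and follows essentially the same route as the paper's: a disjointness contradiction for part (1) and an auxiliary disjoint set $C$ for part (2), with the same dimension count $|M\setminus(A\cup B)|=n-2-r\ge r-1$. The only difference is that you spell out the preimage analysis justifying ``$C+j\notin\cF\Rightarrow C+i\in\cF$,'' which the paper compresses to the phrase ``because $C+i\in\cH$.''
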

	
	\begin{proof}
		For part (\ref{disjointprop}), suppose instead that $C+j\not\in\cF$.
		Then $C+i\in\cF$ because $C+i\in\cH$.
		But then $(A+j)\cap (C+i)=\emptyset$, a contradiction.
		\\
		\\
		For part (\ref{tradeprop}), given such $A$ and $B$, let $C\in\binom{M\setminus (A\cup B)}{r-1}$, which is possible because $|M\setminus (A\cup B)|=(n-2)-r\ge r-1$.
		Then two applications of part (\ref{disjointprop}) yields the result.
	\end{proof}
	
	\begin{lem}\label{preshift}
		For $1\le r<n/2$ and intersecting $\cF\subset\binom{[n]}{r}$ with $|\cF|=\n1cr1$, let $\cH=\Sji(\cF)$ for some $i,j\in [n]$.
		Suppose that $\cH=\binom{[n]}{r}_i$.
		Then either $\cF=\binom{[n]}{r}_i$ or $\cF=\binom{[n]}{r}_j$.
	\end{lem}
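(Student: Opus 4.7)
My approach is to unpack the hypothesis $\cH=\binom{[n]}{r}_i$ to pin down the combinatorial structure of $\cF$, then to invoke Proposition~\ref{tradelemma}(\ref{tradeprop}) together with Fact~\ref{conn} to force $\cF$ into one of the two full-star configurations.

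First I will show that every $F\in\cF$ contains $i$ or $j$: if $F$ contained neither, the shift $\Sji$ would leave $F$ fixed and $F\in\cH$ would contradict $i\notin F$. Next, every $r$-set $D$ containing both $i$ and $j$ must lie in $\cF$: such $D$ is in $\cH$, and unwinding the rule for the family-aware shift $\s'_{j,i}$ shows the only preimage is $D$ itself, since a nontrivial shift always removes $j$. Setting $M=[n]\setminus\{i,j\}$, I then define
\[
\mathcal{A}=\{B\in\binom{M}{r-1}:B+i\in\cF\},\qquad \mathcal{B}=\{B\in\binom{M}{r-1}:B+j\in\cF\}.
\]
A short case analysis on the preimage of $C+i\in\cH$ (for $C\in\binom{M}{r-1}$) yields $\mathcal{A}\cup\mathcal{B}=\binom{M}{r-1}$, and disjointness holds because $B\in\mathcal{A}\cap\mathcal{B}$ would keep $B+j$ fixed under the shift and hence place it in $\cH$ even though $i\notin B+j$.

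With $\mathcal{A}\sqcup\mathcal{B}=\binom{M}{r-1}$ in hand, Proposition~\ref{tradelemma}(\ref{tradeprop}) says precisely that $\mathcal{B}$ is closed under adjacency in $G(M,r-1)$, so it is a union of connected components. Since $r-1\le|M|=n-2$, Fact~\ref{conn} makes $G(M,r-1)$ connected, forcing $\mathcal{B}=\emptyset$ or $\mathcal{B}=\binom{M}{r-1}$. In the first case $\mathcal{A}=\binom{M}{r-1}$ and combining with the two structural facts above gives $\cF=\binom{[n]}{r}_i$; in the second case $\mathcal{A}=\emptyset$ and analogously $\cF=\binom{[n]}{r}_j$.

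I expect the main obstacle to be the bookkeeping in the second paragraph: one must translate the rule for the family-aware shift back into clean membership statements for $\cF$ in order to obtain the partition $\mathcal{A}\sqcup\mathcal{B}=\binom{M}{r-1}$. Once this structural dictionary is set up, Proposition~\ref{tradelemma}(\ref{tradeprop}) combined with the connectivity of the revolving-door graph does all the heavy lifting.
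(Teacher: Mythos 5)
Your proof is correct and follows essentially the same route as the paper's: both reduce the statement to Proposition~\ref{tradelemma}(\ref{tradeprop}) combined with the connectivity of $G(M,r-1)$ from Fact~\ref{conn}, after observing that every $F\in\cF$ meets $\{i,j\}$ and that every $r$-set containing both $i$ and $j$ is already in $\cF$. Your write-up is somewhat more explicit than the paper's (introducing the partition $\mathcal{A}\sqcup\mathcal{B}=\binom{M}{r-1}$ and verifying both the covering and disjointness claims by unwinding the rule for $\s'_{j,i}$), but the key ideas and supporting lemmas are the same.
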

\noindent 
We note that Borg \cite{BorgOne} proved a more general form of this lemma; however, for the sake of completeness and the reader's convenience, we provide a short proof below. 
	\begin{proof}
		Suppose that $\cF\not=\binom{[n]}{r}_i$, and define $M=[n]\setminus\{i,j\}$.
		Since $\Sji(\cF)=\cH$, every set in $\cF$ must contain either $i$ or $j$.
		Thus there must be some $A\sse M$ such that $A+j\in\cF$.
		By Fact \ref{conn} and Proposition \ref{tradelemma}, $\{A+j:A\in \binom{M}{r-1}\}\sse\cF$.
		\\
		\\
		Also, if $j\in S\in\cH$ then $S\in\cF$.
		Hence $\cF=\binom{[n]}{r}_j$.
	\end{proof}
	
	\begin{lem}\label{invphi}
		For $1\le r<n/2$ and shifted, intersecting $\cF\subset\binom{[n]}{r}$ with $\phi(\cF)=\binom{[n]}{r}_1$, we have $\cF=\binom{[n]}{r}_1$.
	\end{lem}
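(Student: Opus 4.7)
The plan is to argue by contradiction. Suppose $\cF \ne \binom{[n]}{r}_1$. By Claim \ref{inj} we have $|\cF|=|\phi(\cF)|=\n1cr1=|\binom{[n]}{r}_1|$, so there exists some $A^*\in\cF$ with $1\notin A^*$. Being an $r$-subset of $[2,n]$, $A^*$ satisfies $A^*\ge [2,r+1]$ in the shift order, and hence shiftedness of $\cF$ forces $[2,r+1]\in\cF$. The intersecting hypothesis then forbids any set in $\cF$ that is disjoint from $[2,r+1]$.

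Next I would single out the explicit witness $C := \{1\}\cup [n-r+2,n]$. Since $r<n/2$ gives $n-r+2>r+1$, we have $C\cap [2,r+1]=\emptyset$, so $C\notin\cF$. On the other hand $C\in\binom{[n]}{r}_1=\phi(\cF)$, so Proposition \ref{phiprop} produces $A := \phinv(C) = C\triangle [2\k_C]\in\cF$.

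The crux is to verify $\k_C=1$, which I would do with a brief three-regime case check on $|C\cap [2k]|$: it equals $1$ while $2k<n-r+2$, equals the middle-range value $2k-n+r$ while $n-r+2\le 2k<n$, and equals $r$ once $2k\ge n$. In each regime the equation $|C\cap [2k]|=k$ fails for every $k\ge 2$ under the hypothesis $r<n/2$ (note in particular that $n-r>n/2$ places the only candidate $k=n-r$ in the middle case outside its range). Thus $\k_C=1$ and $A = C\triangle [2] = \{2\}\cup [n-r+2,n]$. To close the argument, $\cF$ is shifted with $A\in\cF$, $2\in A$, $1\notin A$, so the identity $\sig21(\cF)=\cF$ forces $A-2+1 = \{1\}\cup [n-r+2,n] = C\in\cF$, contradicting $C\notin\cF$. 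The only slightly delicate step is the regime analysis pinning down $\k_C=1$; everything else follows immediately from Proposition \ref{phiprop}, shift-closure, and the intersecting hypothesis.
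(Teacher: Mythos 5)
Your proposal is correct and mirrors the paper's proof in its essential mechanism: both focus on the set $C=\{1,n-r+2,\ldots,n\}$, argue $C\in\phi(\cF)\setminus\cF$, invoke Proposition~\ref{phiprop} to get $\phinv(C)=C\triangle[2]\in\cF$, and derive a contradiction from shiftedness via $\sig21$. The only differences are cosmetic: the paper reaches ``$C\notin\cF$'' by a direct case split (if $C\in\cF$, shiftedness immediately yields the full star), while you detour through $[2,r+1]\in\cF$ and the intersecting hypothesis; you also spell out the verification that $\k_C=1$, a computation the paper leaves implicit, which is a nice supplement.
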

	
	\begin{proof}
		Suppose first that $A=\{1,n-r+2,\ldots,n\}\in \cF$. 
		As $\cF$ is shifted, this implies that $\binom{[n]}{r}_1\sse\cF$, as required. 
		Thus, we may assume that $A\notin \cF$. 
		Since $A\in \phi(\cF)$, we have by Proposition \ref{phiprop} that $\phi^{-1}(A)=\{2,n-r+2,\ldots,n\}\in \cF$. 
		However, because $\cF$ is shifted, we obtain that $A\in \cF$, a contradiction. 
	\end{proof}

	\section{The Hilton--Milner theorem}
	Hilton and Milner \cite{HilMil} characterized the structure of maximum non-star intersecting families. More precisely, they proved the following statement. For $1\leq r\leq n-1$, let $C=[2,r+1]$. Let $\mathcal{H}=\{F\in \Ncr:1\in F,F\cap C\neq \emptyset\}\cup \{C\}$ and $\cK=\{K\in\binom{n}{3}: |K\cap [1,3]|\ge 2\}$.
    \begin{thm}[Hilton--Milner]\label{HM}
    For $2\le r<n/2$, let $\cF\subseteq \Ncr$ be intersecting such that $\bigcap_{F\in \cF}F=\emptyset$. Then $|\cF|\leq \binom{n-1}{r-1}-\binom{n-r-1}{r-1}+1$ and equality holds if and only if $\cF\cong\mathcal{H}$ or $r=3$ and $\cF\cong\cK$.
    \end{thm}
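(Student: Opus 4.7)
The plan is to follow the EKR proof template: shift $\cF$ to a shifted intersecting $\cF'$ with $|\cF'|=|\cF|$, and analyse the injection $\f:\cF'\to\Ncr_1$. Since $\bigcap_{F\in\cF} F = \mt$, iteratively applying Lemma~\ref{preshift} along the shift sequence rules out $\cF' = \Ncr_1$. I split into two cases, depending on whether $\cF'$ contains a set missing the element $1$.

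In the main case, some $F\in\cF'$ satisfies $1\notin F$. By shiftedness, replacing each element of $F$ in turn with the smallest available element of $[2,n]$ produces $[2,r+1]\in\cF'$, so every $G\in\cF'$ meets $[2,r+1]$. For each of the $\binom{n-r-1}{r-1}$ sets $B=\{1\}\cup X$ with $X\sse[r+2,n]$, we then have $B\notin\cF'$ (since $B\cap[2,r+1]=\mt$), so by Proposition~\ref{phiprop}, $B\in\f(\cF')$ would require $\phinv(B)=B\triangle[2\k_B]\in\cF'$. A short computation of $\k_B$ (the maximum $k$ with $|B\cap[2k]|=k$) shows $\k_B\in\{1,r\}$: the value $r$ arises uniquely for $X=[r+2,2r]$, yielding $B_0:=\{1\}\cup[r+2,2r]$ with $\phinv(B_0)=[2,r+1]\in\cF'$ and hence $B_0\in\f(\cF')$; otherwise $\k_B=1$ and $\phinv(B)=\{2\}\cup X$, which by shiftedness (apply $\s_{2,1}$) would force $\{1\}\cup X=B\in\cF'$, a contradiction. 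Hence at most $B_0$ among these $\binom{n-r-1}{r-1}$ sets lies in $\f(\cF')$, giving $|\cF|\le\n1cr1-\binom{n-r-1}{r-1}+1$.

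The other case, $\cF'\sse\Ncr_1$, is where the family lies in the star at $1$ despite $\cF$ being covering. I will trace the shift sequence back to the earliest $\cF_t\sse\Ncr_1$; the preceding shift must be $\s_{a,1}$ for some $a>1$, and the covering property of $\cF_{t-1}$ yields a decomposition $\cF_{t-1}=\cA_a\cup\cA_{\bar a}\cup\cB$, where $\cA_a,\cA_{\bar a}$ consist of the sets containing $1$ (with and without $a$, respectively) and $\cB$ of those containing $a$ but not $1$. Covering forces $\cA_{\bar a},\cB$ both nonempty, and stripping off $1$ and $a$ gives cross-intersecting $(r-1)$-uniform families on $[n]\setminus\{1,a\}$. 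The Hilton--Milner cross-intersecting bound together with $|\cA_a|\le\binom{n-2}{r-2}$ sums via Pascal's identity to the desired HM bound; verifying this cross-intersecting inequality by a parallel injective argument is the main obstacle. For the equality cases, equality in the injection analysis pins down $\cF'$ as the shifted version of $\cH$, and iterated pre-shifting via Lemma~\ref{preshift} together with a small-case check at $r=3$ then recovers $\cF\cong\cH$ or $\cF\cong\cK$.
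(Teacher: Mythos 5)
Your analysis of the main case (where the shift sequence never produces a star, so $\cF'$ is shifted, intersecting, and non\mbox{-}star) is correct and essentially equivalent to the paper's: you count the ``bad'' sets $\{1\}\cup X$ with $X\sse[r+2,n]$ that could lie in $\phi(\cF')$ and show only $B_0=\{1\}\cup[r+2,2r]$ (the image of $C$) survives, whereas the paper phrases the same fact by redefining $\phi$ at $C$ so that the image lands in $\cH$. Either way one gets $|\cF'|\le\n1cr1-\binom{n-r-1}{r-1}+1$, and your computation that $\k_B\in\{1,r\}$ with $\k_B=r$ only for $X=[r+2,2r]$ is right.

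However, there are two genuine gaps. First, your opening claim that ``iteratively applying Lemma~\ref{preshift} along the shift sequence rules out $\cF'=\Ncr_1$'' does not work: Lemma~\ref{preshift} requires $|\cF|=\n1cr1$, whereas a non\mbox{-}star intersecting family has $|\cF|<\n1cr1$. The shift sequence can indeed hit a (proper) star, and this case must be handled directly. Your proposed fix for it---decomposing at the last pre-star step and invoking ``the Hilton--Milner cross-intersecting bound''---imports an external result of comparable strength to what is being proved, which you yourself flag as ``the main obstacle.'' The paper instead uses a self-contained device you have missed: at the step where $\sigma_{2,1}$ first produces a star, it \emph{enlarges} the pre-star family $\cF'_1$ to $\cF^*=\cF'_1\cup\{G\in\Ncr:\{1,2\}\sse G\}$. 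This stays intersecting (every member of $\cF'_1$ meets $\{1,2\}$), contains the shift-invariant witness $\binom{[r+1]}{r}$, and therefore remains non\mbox{-}star after shifting to $\cF'$, so the main-case injection applies to the larger family and still bounds $|\cF|$.

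Second, the equality characterization is not covered by ``iterated pre-shifting via Lemma~\ref{preshift}.'' That lemma reverses a shift applied to a \emph{full star}; what is needed here is to reverse a shift applied to a family isomorphic to $\cH$ or to $\cK$, showing the preimage is again isomorphic to $\cH$ or $\cK$. These are separate and rather intricate structural statements (the paper's Lemmas~\ref{preshiftK} and~\ref{preshiftH}, together with the $\phi'$-preimage analysis in Lemma~\ref{invphiH}), and they do not follow from the full-star lemma. As written, your proposal establishes the bound only in the non\mbox{-}star\mbox{-}shift case and does not close either the covering/star case or the uniqueness case.
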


	\begin{proof}
Using a shifting idea similar to the one used by Frankl and F\"uredi \cite{FrankFur} in their inductive proof of the Hilton--Milner theorem, we construct an injection that maps any non-star intersecting family to a subfamily of $\mathcal{H}$ as follows.
   Let $\cF\subseteq \Ncr$ be an intersecting non-star with $r<n/2$ and $1\not\in F\in\cF$.
   \\
   \\
Perform shifts on $\cF$ until either it becomes a star or is shifted.
The latter case results in the non-star, shifted $\cF'$.
Since it is non-star, some $F\in\cF'$ does not contain $1$. 
Because it is shifted, $C\in\cF'$. 
Note that $\binom{[r+1]}{r}\sse\cF'$.
   \\
   \\
The former case leads to intermediate families $\cF^1$ (non-star) and $\cF^2$ such that $\sigma_{x,y}(\cF^1)=\cF^2$ and $y\in \bigcap_{F\in \cF^2}F$. 
   Clearly, for each $F\in \cF^1$, $F\cap \{x,y\}\neq \emptyset$. 
   Without loss of generality (relabeling if necessary), assume $x=2$ and $y=1$. 
   To $\cF^1$, we apply all shifting operations $\sigma_{i,j}$ with $2\leq j<i\leq n$ to obtain $\cF'_1$. 
   Note that $C\in \cF'_1$. Also, as $\cF_1$ is non-star, there exists some $G\in \cF_1$ such that $G\cap \{1,2\}=\{1\}$. As $\cF'_1$ is shifted, this implies that $\{1,3,\ldots,r+1\}\in \cF'_1$.
   \\
   \\
   Let $\mathcal{F}^*=\cF'_1\cup \{G\in \Ncr:\{1,2\}\subseteq G\}$. 
   This implies that $\binom{[r+1]}{r}\subseteq \mathcal{F}^*$. 
   Finally, apply all shifts to $\mathcal{F}^*$ until we obtain a shifted intersecting family $\mathcal{F}'$. 
   The fact that $\binom{[r+1]}{r}$ is unchanged by any shifting operation means $\cF'$ is also a non-star family. 
   \\
   \\
  Now, define the injection $\phi':\cF'\to \Ncr_1$ as $\phi'(C)=C$ and $\phi'(F)=\phi(F)$ otherwise. We only have to show that for each $F\in \cF'\setminus \{C\}$, $\phi'(F)\cap C\neq \emptyset$. If $1\in F$, then this is obvious as $\cF'$ is intersecting and $\phi'(F)=F$, so suppose $1\notin F$. Let $\kappa=\kappa_F\leq r$. Clearly, there exists an $x\in (C\cap [2\kappa]) \setminus (F\cap [2\kappa])$ (otherwise $F=C$), which implies $x\in \phi'(F)$ as required.
  \\
  \\
  The characterization of extremal families is carried out in Lemmas \ref{invphiH}, \ref{preshiftK}, and \ref{preshiftH}, below.
	\end{proof}
  
	\begin{lem}\label{invphiH}
		For $2\le r<n/2$ and shifted, intersecting $\cF\subset\binom{[n]}{r}$ with $\phi'(\cF)=\cH$, we have $\cF=\cH$ or $r=3$ and $\cF=\cK$.
	\end{lem}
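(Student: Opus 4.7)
The plan is to adapt Lemma \ref{invphi} by locating the shift-maximal set in $\cH\setminus\{C\}$ and case-splitting on whether it lies in $\cF$.

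I would first argue $C\in\cF$. Since $C\in\cH=\phi'(\cF)$, some $F\in\cF$ satisfies $\phi'(F)=C$; if $F\neq C$ then $\phi(F)=C$, and the case $1\in F$ forces $F=C$ (impossible because $1\notin C$), so $1\notin F$ and Proposition \ref{phiprop} yields $F=C\triangle [2\k_F]$. Since $\k_F\ge 1$ whenever $1\notin F$ (which follows from Proposition \ref{FranklProp}), we have $1\in [2\k_F]\setminus C$, forcing $1\in F$, a contradiction. Hence $F=C$, so $C\in\cF$.

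Now let $A_0=\{1,r+1,n-r+3,\ldots,n\}=\{a_1<\cdots<a_r\}$. The defining constraints of $\cH\setminus\{C\}$ (namely $1\in B$ and $B\cap C\neq\emptyset$) immediately give $b_i\le a_i$ for every $B=\{b_1<\cdots<b_r\}\in\cH\setminus\{C\}$, so each such $B$ is reachable from $A_0$ by a sequence of $(i,j)$-shifts. Consequently, if $A_0\in\cF$ then shiftedness of $\cF$ yields $\cH\setminus\{C\}\sse\cF$; combined with $C\in\cF$ and $|\cF|=|\cH|$ (from injectivity of $\phi'$), this forces $\cF=\cH$.

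Otherwise $A_0\notin\cF$, and Proposition \ref{phiprop} forces $F_0:=A_0\triangle [2\k_{A_0}]\in\cF$. A direct calculation gives $\k_{A_0}=1$ for $r\ge 4$ (so $F_0=\{2,r+1,n-r+3,\ldots,n\}$), $\k_{A_0}=2$ for $r=2$ (so $F_0=\{2,4\}$), and $\k_{A_0}=2$ for $r=3$ (so $F_0=\{2,3,n\}$). For $r\ge 4$ the single shift $2\to 1$ applied to $F_0$ reproduces $A_0$, contradicting $A_0\notin\cF$; for $r=2$ the successive shifts $2\to 1$ then $4\to 3$ reproduce $A_0=\{1,3\}$, again a contradiction. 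For $r=3$, however, shiftedness of $\cF$ applied to $\{2,3,n\}$ (shifting $n$ downward, then $2\to 1$, then $3\to 2$) populates $\cF$ with exactly the sets of the forms $\{2,3,k\}$, $\{1,3,k\}$, and $\{1,2,k\}$ in the relevant ranges, which is precisely $\cK$; since $|\cF|=|\cH|=|\cK|=3n-8$, we conclude $\cF=\cK$.

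The main obstacle is the $r=3$ branch: $\k_{A_0}=2$ prevents the quick contradiction available for other $r$, so one must instead verify that the shift-closure of $\{2,3,n\}$ is exactly $\cK$, after which the matching cardinalities $|\cF|=|\cH|=|\cK|$ close the case by equality rather than producing a strict inclusion.
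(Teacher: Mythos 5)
Your proof is correct and takes essentially the same approach as the paper's: both identify the shift-maximal set $A_0=\{1,r+1,n-r+3,\ldots,n\}$ in $\cH\setminus\{C\}$, split on whether $A_0\in\cF$, and in the remaining case apply Proposition \ref{phiprop} to produce $F_0=A_0\triangle[2\k_{A_0}]$, deriving a contradiction via shiftedness when $r\neq 3$ and recovering $\cK$ when $r=3$. The only differences are presentational: the paper dismisses $r=2$ as trivial where you handle it uniformly via $F_0=\{2,4\}$, and the paper leaves the closing cardinality step and the identification of the shift-closure of $\{2,3,n\}$ with $\cK$ implicit, whereas you spell them out.
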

	
	\begin{proof}
    The case $r=2$ is trivial, so assume that $r\ge 3$.
		Suppose first that $A=\{1,r+1,n-r+3,\ldots,n\}\in \cF$. 
		As $\cF$ is shifted, this implies that $\cH-\{C\}\sse\cF$, as required. 
		Thus, we may assume that $A\notin \cF$ (so $\cF\neq\cH$). 
		Since $A\in \phi'(\cF)$, we have by Proposition \ref{phiprop} that either $\phi'^{-1}(A)=\{2,3,n\}\in \cF$ when $r=3$ (since $n\ge 7$) or $\phi'^{-1}(A)=\{2,r+1,n-r+3,\ldots,n\}\in \cF$ when $r\ge 4$.
		Because $\cF$ is shifted, we obtain either that $\phi'^{-1}(\cH)=\cK$ when $r=3$ or that $A\in \cF$ when $r\ge 4$, a contradiction. 
	\end{proof}
\noindent
Backing up further, the maximality of $|\cF|$ implies that $\{G\in\Ncr: \{1,2\}\sse G\}\sse\cF'_1$, so that $\cF^*=\cF'_1$.
\\
\\
It is fairly easy to see that if $\cF\cong\cK$ then any $\sigma_{i,j}(\cF)\cong\cK$, and so $\cF'=\cK$, and then $\phi'(\cK)=\cK$.
Similarly, if $\cF\cong\cH$ then any $\sigma_{i,j}(\cF)\cong\cH$, and so $\cF'=\cH$, and then $\phi'(\cK)=\cH$.
The proofs rely on the idea of symmetry: there are two types of elements in $\cK$ and three types in $\cH$; the shift $\sigma_{i,j}$ does not change either family when $i$ and $j$ have the same type, and swaps the types when their types differ.
The converse of these statements is recorded in the following two lemmas.

\begin{lem}\label{preshiftK}
If $\cG\cong\cK$ then $\sigma_{i,j}^{-1}(\cG)\cong\cK$.
\end{lem}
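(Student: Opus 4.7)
The plan is to prove Lemma~\ref{preshiftK} by case analysis on where the shift indices $i$ and $j$ lie relative to the three-element core of $\cG$. I use implicitly that the pre-image $\cF$ is intersecting, since this lemma is applied to intersecting families in the Hilton--Milner argument above; indeed, without this the claim is false (partial unshifts produce pre-images that are not isomorphic to $\cK$).

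Let $\cG \cong \cK$ have core $\{a,b,c\}$, so $\cG = \{K : |K \cap \{a,b,c\}| \geq 2\}$, and let $\cF$ be intersecting with $\sigma_{i,j}(\cF) = \cG$. Since $A \mapsto \sigma'_{i,j}(A)$ is a bijection from $\cF$ to $\cG$, one sees that $\cF$ is obtained from $\cG$ by choosing a subset $S$ of ``unshiftable'' sets---those $B \in \cG$ with $i \notin B$, $j \in B$, and $B - j + i \notin \cG$---and replacing each $B \in S$ with $B - j + i$. A routine computation of $|(B - j + i) \cap \{a,b,c\}|$ shows that no unshiftable sets exist when $\{i,j\}$ is contained in the core, contained in its complement, or satisfies $i \in \{a,b,c\}$ and $j \notin \{a,b,c\}$; in each of these sub-cases $\cF = \cG \cong \cK$ immediately.

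The remaining case is $i \notin \{a,b,c\}$ and $j \in \{a,b,c\}$; by relabeling within the core I assume $j = a$. The unshiftable sets are then exactly $B = \{a, c', x\}$ for $c' \in \{b,c\}$ and $x \in [n] \setminus \{a,b,c,i\}$, each unshifting to $\{c', i, x\}$. If $S = \emptyset$, then $\cF = \cG \cong \cK$. Otherwise, fix $B_0 = \{a, c', x_0\} \in S$. For any $F = \{a, c'', y\}$ with $\{c',c''\} = \{b,c\}$ and $y \in [n] \setminus \{a,b,c,i,x_0\}$, one checks element-by-element that $F \cap (B_0 - a + i) = \emptyset$, so the intersecting hypothesis forces $F \in S$. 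Iterating this argument with the roles of $c'$ and $c''$ reversed (which is feasible because $n-4 \geq 3$, following from $r = 3 < n/2$) forces $S$ to contain every unshiftable set. A direct verification then shows that the resulting $\cF$ equals $\{K : |K \cap \{b,c,i\}| \geq 2\}$, a family with new core $\{b,c,i\}$, so $\cF \cong \cK$.

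The main obstacle is the iteration step in the last case: one must use the intersecting property repeatedly to propagate membership in $S$ across all unshiftable sets. Once this is in place, the explicit identification of $\cF$ with the core-$\{b,c,i\}$ family is bookkeeping.
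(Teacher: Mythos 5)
Your proof is correct and takes essentially the same approach as the paper: case analysis on the position of $i,j$ relative to the three-element core, with the three trivial configurations handled by showing no set can be unshifted and the nontrivial configuration ($j$ in the core, $i$ outside) resolved by repeatedly invoking the intersecting property of the pre-image to force either the identity or a full unshift to the new core $\{b,c,i\}$. You are also right that the lemma as literally stated implicitly requires $\sigma_{i,j}^{-1}(\cG)$ to be intersecting; the paper uses this silently in its Case~4 (where it concludes $W\notin\cG'$ from $W\cap Z=\emptyset$), so making it explicit as you do is a sound clarification rather than a deviation.
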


\begin{proof}
Define $X$ and $Y=[n]\setminus X$ so that $\cG=\{X\}\cup \{G\in\binom{[n]}{3}: |G\cap X|=2\}$, and denote $\cG'=\sigma_{i,j}^{-1}(\cG)$.
\begin{enumerate}
\item
Case: $i,j\in X$.
\\
\\
It must be that $\cG'=\cG$; otherwise there must be some $k\in X\setminus\{i,j\}$ and $y\in Y$ such that $\sigma_{i,j}^{-1}(\{j,k,y\})=\{i,k,y\}$.
But this would mean that $\{i,k,y\}\not\in\cG$, a contradiction.
\\
\item
Case: $i,j\in Y$.
\\
\\
It must be that $\cG'=\cG$; otherwise there must be some $\{k,l\}\subset X$ such that $\sigma_{i,j}^{-1}(\{k,l,j\})=\{k,l,i\}$.
But this would mean that $\{k,l,i\}\not\in\cG$, a contradiction.
\\
\\
\item
Case: $i\in X, j\in Y$. 
\\
\\
It must be that $\cG=\cG'$.
Indeed, for every set $Z\in\cG$ with $i\in Z$ or $j\not\in Z$ we must have $Z\in\cG'$.
Thus we only need to consider sets $Z\in\cG$ for which $i\not\in Z$ and $j\in Z$.
Hence $Z=X\setminus \{i\}\cup \{j\}$.
But as $X\in\cG\cap\cG'$ we must have $Z\in\cG'$.
\item
Case: $i\in Y, j\in X$. 
\\
\\
Suppose that $\cG'\neq\cG$.
Then there is some set $Z\in\cG'\setminus\cG$, which means that $i\in Z$ and $j\not\in Z$.
Let $X=\{j,k,l\}$ and $X'=\{i,k,l\}$.
Because $\{X,X'\}\subset\cG$, we know that $Z\neq X'$; without loss of generality, since $Z\setminus\{i\}\cup\{j\}\in\cG$, $Z=\{i,k,y\}$ for some $y\in Y$.
\\
\\
Now consider any set $W=\{j,l,w\}$ for $w\in Y\setminus\{y\}$.
Because $W\in\cG$ and $W\cap Z=\emptyset$, we must have that $W\not\in\cG'$ and consequently that $W'=\{i,l,w\}\in\cG'$.
\\
\\
Similarly, consider any set $V=\{j,k,v\}$ for $v\in Y\setminus\{w\}$.
Because $V\in\cG$ and $V\cap W=\emptyset$, we must have that $V\not\in\cG'$ and consequently that $V'=\{i,k,v\}\in\cG'$.
\\
\\
Finally, for every set $U=\{k,l,u\}$ for any $u\in Y$, we have that $U\in\cG\cap\cG'$.
Hence $\cG=\{X'\}\cup \{G\in\binom{[n]}{3}: |G\cap X'|=2\}$, and thus $\cG'\cong\cK$.

\end{enumerate}
\end{proof}

\begin{lem}\label{preshiftH}
If $\cG\cong\cH$ then $\sigma_{i,j}^{-1}(\cG)\cong\cH$.
\end{lem}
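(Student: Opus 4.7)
The plan is to mirror Lemma~\ref{preshiftK} via a case analysis, now with three element-types in $[n]$ induced by $\cG\cong\cH$. Writing $\cG=\{F\in\Ncr: x^*\in F,\,F\cap C^*\ne\emptyset\}\cup\{C^*\}$ with $x^*\in [n]$ and $C^*\in\binom{[n]\setminus\{x^*\}}{r}$, setting $Y=[n]\setminus(\{x^*\}\cup C^*)$ and denoting $\cG'=\sigma_{i,j}^{-1}(\cG)$, the three types are $\{x^*\}$, $C^*$, and $Y$. Because $|\cG'|=|\cG|$, if $\cG'\ne\cG$ there must exist a pair $A\in\cG'\setminus\cG$ and $B=A-i+j\in\cG\setminus\cG'$ with $i\in A$, $j\notin A$, $A\notin\cG$, and $B\in\cG$. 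I would then case on the types of $i$ and $j$.

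For the same-type cases $\{i,j\}\subseteq C^*$ and $\{i,j\}\subseteq Y$, swapping $i$ and $j$ preserves both $x^*$-membership and $|A\cap C^*|$, so $A\in\cG$ if and only if $B\in\cG$, a contradiction that forces $\cG'=\cG$ (parallel to Cases 1 and 2 of Lemma~\ref{preshiftK}). For the cases with $i=x^*$ or $(i,j)\in C^*\times Y$, one checks that when $r\ge 3$ some star-$x^*$ set (or $C^*$ itself) has its $\sigma_{i,j}$-image outside $\cG$, so $\cG$ is not $(i,j)$-shifted and lies outside the image of $\sigma_{i,j}$; hence no preimage exists and the statement is vacuous. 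The boundary $r=2$ instances yield $\cG'=\cG$ by direct check.

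The substantive cases are the three role-swap scenarios $(i,j)\in C^*\times\{x^*\}$, $Y\times\{x^*\}$, and $Y\times C^*$. In each, assuming $\cG'\ne\cG$, a single hypothetical pair $(A,B)$ is bootstrapped, via the intersecting property of $\cG'$, into a complete cascade of forced swaps. To illustrate, take $i\in Y$ and $j=x^*$: such an $A$ satisfies $i\in A$, $x^*\notin A$, and $A\cap C^*\ne\emptyset$, and for every star-$x^*$ set $F\in\cG$ with $F\cap A=\emptyset$ the intersecting property forces $F\notin\cG'$ and hence $F-x^*+i\in\cG'$ as the $\sigma_{i,x^*}$-partner of $F$. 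Iterating, every star-$x^*$ set hitting $C^*$ but missing $i$ is swapped with its star-$i$ counterpart, so $\cG'=\{F: i\in F,\,F\cap C^*\ne\emptyset\}\cup\{C^*\}\cong\cH$ with center $i$ and the same special set $C^*$. The other two cases are handled analogously, yielding $\cH$-copies in which $(x^*,C^*)$ is replaced by $(i,(C^*\setminus\{i\})\cup\{x^*\})$ and by $(x^*,(C^*\setminus\{j\})\cup\{i\})$, respectively.

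The hard part will be verifying that each cascade closes consistently into a full $\cH$-family: the swaps forced by intersecting-ness must assemble precisely into an $\cH$-structure, with no leftover moved sets and no alternative intersecting configuration available. The key point is that each new set placed in $\cG'\setminus\cG$ imposes fresh intersecting constraints that demand further swaps of exactly the right form, so the cascade is rigid and terminates precisely when the candidate $\cH$-family has been fully assembled.
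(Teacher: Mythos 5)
Your proposal follows the same overall strategy as the paper's proof: partition $[n]$ into three element types (center $x^*$, special set $C^*$, remainder $Y$), case on the types of $i$ and $j$, dispatch the ``same-type'' and ``trivial-swap'' cases quickly, and bootstrap a cascade of forced swaps in the genuine role-swap cases. The vacuity observation (if $\cG$ is not $(i,j)$-shifted, no preimage exists, so the claim is vacuously true) is a real and correct simplification over the paper's handling of the cases $i=x^*$ and $(i,j)\in C^*\times Y$, where the paper instead argues directly that $\cG'=\cG$; either route works, but you should still write out the verification rather than rely on ``one checks,'' and be careful with the $r=2$ boundary you flag.

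The genuine gap is in the role-swap cases, and you are right to flag it as ``the hard part,'' because it \emph{is} the substance of the lemma. Your illustration for $(i,j)\in Y\times\{x^*\}$ shows how a single $A\in\cG'\setminus\cG$ forces $F\mapsto F-x^*+i$ for every star-$x^*$ set $F$ that is \emph{disjoint} from $A$, but the cascade must then reach every star-$x^*$ set meeting $C^*$ and missing $i$ — including those that \emph{intersect} $A$. This requires showing that the newly forced sets in $\cG'$ are themselves disjoint from enough remaining sets of $\cG$ to propagate, and that no partial swap pattern can stabilize into an intersecting family with $|\cG'|=|\cG|$. The paper does exactly this with an explicit induction on a size parameter (it is especially delicate in the case $i\in Y$, $j=x^*$, which splits into two subcases: one where the cascade fizzles so $\cG'=\cG$, and one where it completes into a fresh copy of $\cH$). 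That induction, plus the analogous arguments in the other two role-swap cases, is the bulk of the proof and is missing from your write-up. Your claimed target copies of $\cH$ for each case look right, but a rigorous proof needs the full propagation argument.
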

	
\begin{proof}
Define the partition $\{X,Y,\{z\}\}$ of $[n]$ so that $\cG=\{X\}\cup \{G\in\binom{[n]}{r}_z: G\cap X\neq\emptyset\}$, and denote $\cG'=\sigma_{i,j}^{-1}(\cG)$ and $X'=X\setminus \{j\}\cup \{i\}$.
We first note that $\cG_i\subset\cG'$. 
Also $\cG_{\oj}\subset\cG'$, where $\cG_{\oj}=\{G\in\cG: j\not\in\cG\}$.
If it is the case that $\cG'\neq\cG$, then there is some $Z'\in\cG'\setminus\cG$, which means that $i\in Z'$ and $j\not\in Z'$. 
Moreover, $Z=Z'\setminus\{i\}\cup\{j\}\in\cG$.
\begin{enumerate}
\item
Case: $i=z$. 
\\
\\
It must be that $\cG'=\cG$.
Otherwise, since the only set in $\cG$ without $i$ is $X$, we would have that $Z'=X'$.
However, this would mean that $Z'\in\cG$, a contradiction.
\\
\item 
Case: $i,j\in X$ or $i,j\in Y$.
\\
\\
It must be that $\cG'=\cG$; otherwise there must be some $K\sse X\setminus\{i,j\}$ and $S\sse Y\setminus\{i,j\}$ such that $\sigma_{i,j}^{-1}(\{z,j\}\cup K\cup S)=\{z,i\}\cup K\cup S$.
But this would mean that $\{z,i\}\cup K\cup S\not\in\cG$, a contradiction.
\\
\item 
Case: $i\in X, j\in Y$.
\\
\\
Again we argue that $\cG'=\cG$. 
Otherwise, as $i\in X$, we have $Z\cap (X\setminus \{i\})\neq \emptyset$. 
However this implies that $Z'\cap X\neq \emptyset$. 
Since $z\in Z'$, this implies $Z'\in \cG$, a contradiction. 
\\
\item 
Case: $i\in X, j=z$.
\\
\\
Suppose that $\cG'\neq\cG$.
Choose any $v\in Z'\setminus\{i\}$ and $u\not\in Z'\cup\{j\}$ such that, if $Z'\cap X=\{i,v\}$ then $u\in X$, and define $U'=Z'\setminus\{v\}\cup\{u\}$ and $U=U'\setminus\{i\}\cup\{j\}$.
We show that $U'\in\cG'$.
\\
\\
Since $i\in Z'\neq X$ and $n>2r$, we can choose a set $W$ containing $z$ and $u$ that intersects both $X\setminus Z'$ and $Y\setminus Z'$ and is disjoint from $Z'$; clearly $W\in \cG$.
Let $W'=W\setminus\{j\}\cup\{i\}$.
Because $W\cap Z'=\emptyset$ it must be that $W'\in \cG'$. 
Notice that $W'\cap U=\emptyset$.
Thus $U\not\in\cG'$, and so $U'\in\cG'$.
\\
\\
Using this argument repeatedly, we see that, for every set $V'$ for which $i\in V$, $j\not\in V$, and $V\cap (X\setminus\{i\}\neq\emptyset$, we have $V'\in\cG'$.
This implies that $\cG'=\{X'\}\cup\{G\in\binom{[n]}{r}_i: G\cap X'\neq\emptyset\}\cong\cH$.
\\
\item
Case: $i\in Y, j\in X$. 
\\
\\
Suppose that $\cG'\neq\cG$.
We first make note that, for every nonempty $U\subsetneq X\setminus\{j\}$ and every $V\in\binom{Y\setminus\{i\}}{r-2-|U|}$, both $S\cup\{i\}\in\cG$ and $S\cup\{j\}\in\cG$, where $S=\{z\}\cup U\cup V$.
Hence both $S\cup\{i\}\in\cG'$ and $S\cup\{j\}\in\cG'$.
\\
\\
Next, for every nonempty $U\subsetneq X\setminus\{j\}$ and every $V\in\binom{Y\setminus\{i\}}{r-1-|U|}$, we have $S=\{z\}\cup U\cup V\in\cG$.
Hence $S\in\cG'$ as well.
Similarly, for every nonempty $U\subsetneq X\setminus\{j\}$ and every $V\in\binom{Y\setminus\{i\}}{r-3-|U|}$, we have $S=\{z,i,j\}\cup U\cup V\in\cG$, and thus $S\in\cG'$ also.
(Note that $S=\{z,i,j\}$ when $r=3$.)
\\
\\
Therefore we know that $Z'\cap X=\emptyset$, and so $X\not\in\cG'$, implying that $X'\in\cG'$.
For every remaining set $S\in\cG$ we have $S=\{z,j\}\cup V$ for some $V\subset Y$.
Since $S\cap X'=\emptyset$, we must have $S\setminus\{j\}\cup\{i\}\in\cG'$.
Thus $\cG'=\{X'\}\cup\{G\in\binom{[n]}{r}_z: G\cap X'\neq\emptyset\}\cong\cH$.
\\
\item
Case: $i\in Y, j=z$. 
\noindent
\begin{enumerate}
\item
Subcase: For every $x\in X$ we have $X\setminus\{x\}\cup\{j\}\in\cG'$.
\\
\\
This implies that $V\cup\{x,j\}\in\cG'$ for all $V\in\binom{Y\setminus\{i\}}{r-2}$.
\\
\\
Suppose that, for some $1\le k<r$, we have $\{j\}\cup U\cup V\in\cG'$ for all $U\in\binom{X}{k}$ and $V\in\binom{Y\setminus\{i\}}{r-k-1}$.
Then we claim that $\{j\}\cup U'\cup V'\in\cG'$ for all $U'\in\binom{X}{k+1}$ and $V'\in\binom{Y\setminus\{i\}}{r-k-2}$.
Indeed, choose such a $U'$ and $V'$, let $y\in Y\setminus\{i\}$ and $V''\ \sse\ Y\setminus (V'\cup\{i,y\})$ for some $|V''|=k$, and define $U''=X\setminus U'$.
Finally, choose $U\in\binom{U'}{k}$ and set $V=V'\cup\{y\}$.
Then $\{j\}\cup U\cup V\in\cG'$ and, since $(\{j\}\cup U\cup V)\cap (U''\cup V''\cup\{i\})=\emptyset$, we have $\{j\}\cup U''\cup V''\in\cG'$.
Similarly, because $(\{j\}\cup U''\cup V'')\cap (U'\cup V'\cup\{i\})=\emptyset$, we have $\{j\}\cup U'\cup V'\in\cG'$.
\\
\\
By induction, we have that $\cG'=\cG$.
\\
\item
Subcase: There is some $x\in X$ such that $Z=X\setminus\{x\}\cup\{j\}\not\in\cG'$ and $Z'=X\setminus\{x\}\cup\{i\}\in\cG'$.
\\
\\
This implies that $V\cup\{x,i\}\in\cG'$ for every $V\subset\binom{Y\setminus\{i\}}{r-2}$.
Next we claim that, for any $x'\in X\setminus\{x\}$ we have $V'\cup\{x',i\}\in\cG'$ for every $V'\subset\binom{Y\setminus\{i\}}{r-2}$. 
If not, then $V'\cup \{x',j\}\in \cG'$ and $(V'\cup \{x',j\})\cap Z'=\emptyset$, a contradiction.
\\
\\
Now suppose that for some $1\leq k<r$ we have $U\cup V\cup\{i\}\in\cG'$ for every $\emptyset\neq U\subsetneq X$, $|U|=k$ and $V\subset Y\setminus\{i\}$ with $|V|=r-k-1$.
We claim that we also have $U'\cup V'\cup\{i\}\in\cG'$ for every $\emptyset\neq U'\subsetneq X$, $|U'|=k+1$ and $V'\subset Y\setminus\{i\}$ with $|V'|=r-1-|U'|$. 
Suppose not. 
Then $U'\cup V'\cup \{j\}\in \cG'$. 
Choose sets $U$ and $V$ such that $U\subseteq X\setminus U'$, $V\subseteq Y\setminus (V'\cup \{i\})$ and $|U|\leq k$. 
As $n>2r$, such a choice of $U$ is always possible. 
Now, as $U\cup V\cup \{i\}\in \cG'$ by the induction hypothesis and $(U\cup V\cup \{i\})\cap (U\cup V\cup \{i\})=\emptyset$, this is a contradiction. 
\\
\\
By induction we have $\cG'=\{X\}\cup \{G\in\binom{[n]}{r}_i: G\cap X\neq\emptyset\}\cong\cH$.
\end{enumerate}
\end{enumerate}
\end{proof}


\begin{thebibliography}{10}
    
       \bibitem{BorgOne} P. Borg,
       Extremal $t$-intersecting sub-families of hereditary families, 
       J. London Math. Soc. 79 (2009), 167--185.
       
       \bibitem{BorgTwo} P. Borg,
       Strongly intersecting integer partitions, 
       Discrete Math. 336 (2014), 80--84.
		
		\bibitem{Daykin} D. Daykin, 
        Erd\H{o}s--Ko--Rado from Kruskal-Katona, 
        J. Combinatorial Theory Ser. A {\bf 17} (1974) 254 -- 255.
        
        \bibitem{DezFra} M. Deza and P. Frankl,
        The Erd\H{o}s--Ko--Rado theorem -- 22 years later,
        SIAM J. Algebraic Discrete Methods 4 (1983), 419--431.
		
		\bibitem{EKR}
		P. Erd\H os, C. Ko, and R. Rado,
		Intersection theorems for systems of finite sets,
		Quart. J. Math. Oxford Ser. (2) {\bf 12} (1961), 313--320. 
		
		\bibitem{FranklShift} P. Frankl, The shifting technique in extremal set
		theory,  Surveys in combinatorics 1987 (New Cross, 1987), London
		Math. Soc. Lecture Note Ser., 123, Cambridge Univ. Press, Cambridge,
		1987, 81 -- 110.	
        
        \bibitem{FrankFur} P. Frankl and Z. F\"uredi,
        Non-trivial intersecting families,
        J. Combin. Theory Ser. A 41 (1986), 150--153.
		
		\bibitem{FranklFuredi} P. Frankl and Z. F\"uredi, 
        A new, short proof of the EKR theorem,
		J. Combin. Theory Ser. A {\bf 119} (2012), no. 6, 1388–1390.
        
        \bibitem{FraTok} P. Frankl and N. Tokushige,
        Invitation to intersection problems for finite sets, 
        J. Combin. Theory Ser. A 144 (2016), 157--211.
		
		\bibitem{GodMea}
		C. Godsil and K. Meagher,
		Erd\H os-Ko-Rado Theorems: Algebraic Approaches,
		Cambridge Studies in Advanced Mathematics,
		Cambridge University Press, 2015. 
        
        \bibitem{HilMil}
		A. J. W. Hilton and E. C. Milner,
        Some intersection theorems for systems of finite sets,
        Quart. J. Math. Oxford (2) 18 (1967) 369--384
		\bibitem{HurKamChor}
		G. Hurlbert and V. Kamat,
		Erd\H os-Ko-Rado theorems for chordal graphs and trees,
		J. Comb. Th. (A) {\bf 118} (2011), no. 3, 829--841.
		
		\bibitem{Katona}
		G. O. H. Katona,
		A simple proof of the Erd\H os-Chao Ko-Rado theorem, 
		J. Combin. Theory (B) {\bf 13} (1972), 183-–184.
        
        \bibitem{KK1}
        G. O. H. Katona,
        A theorem of finite sets, 
        in: Theory of graphs, Proc. Colloq. Tihany, Akad\'emiai Kiad\'o, 1968, pp. 187--207.
        
        \bibitem{KatShadow}
        G. O. H. Katona,
        Intersection theorems for systems of finite sets, 
        Acta Math. Acad. Sci. Hungar. 15 (1964), 329--337.
       
		\bibitem{Knuth}
		D.E.  Knuth,
		The art of computer programming. Vol. 4, Fasc. 3. 
		Generating all combinations and partitions. 
		Addison-Wesley, Upper Saddle River, NJ, 2005.
        
        \bibitem{KK2}
        J. B. Kruskal,
        The number of simplices in a complex, 
        in: Mathematical Optimization Techniques, University of California Press, Berkeley, California,             1963, pp. 251--278.
		
	\end{thebibliography}
\end{document}